\documentclass{article}
\usepackage{amssymb}
\usepackage{amsmath}
\usepackage{makeidx}

\setcounter{MaxMatrixCols}{10}

\newtheorem{theorem}{Theorem}

\newtheorem{corollary}[theorem]{Corollary}

\newtheorem{definition}[theorem]{Definition}
\newtheorem{example}[theorem]{Example}

\newtheorem{remark}[theorem]{Remark}

\newenvironment{proof}[1][Proof]{\noindent\textbf{#1.} }{\ \rule{0.5em}{0.5em}}
\input{tcilatex}
\begin{document}

\title{A distinguished Riemannian geometrization for quadratic Hamiltonians
of polymomenta}
\author{Alexandru Oan\u{a} and Mircea Neagu}
\date{}
\maketitle

\begin{abstract}
In this paper we construct a distinguished Riemannian geometrization on the
dual $1$-jet space $J^{1\ast }(\mathcal{T},M)$ for the multi-time quadratic
Hamiltonian function%
\begin{equation*}
H=h_{ab}(t)g^{ij}(t,x)p_{i}^{a}p_{j}^{b}+U_{(a)}^{(i)}(t,x)p_{i}^{a}+%
\mathcal{F}(t,x).
\end{equation*}%
Our geometrization includes a nonlinear connection $N,$ a generalized Cartan
canonical $N$-linear connection $C\Gamma (N)$ (together with its local
d-torsions and d-curvatures), naturally provided by the given quadratic
Hamiltonian function depending on polymomenta.
\end{abstract}

\textit{2010 Mathematics Subject Classification:} 70S05, 53C07, 53C80.

\textit{Key words and phrases: }dual $1$-jet spaces, metrical multi-time
Hamilton spaces, nonlinear connections, generalized Cartan canonical $N$%
-linear connection, d-torsions and d-curvatures.

\section{Short introduction}

In the last decades, numerous scientists were preoccupied by the
geometrization of Hamiltonians depending on polymomenta. In such a
perspective, we point out that the Hamiltonian geometrizations are achieved
in three distinct ways:

\begin{itemize}
\item[$\blacklozenge $] the \textit{multisymplectic Hamiltonian geometry} $-$
developed by Gotay, Isenberg, Marsden, Montgomery and their co-workers (see 
\cite{Gota+Isen+Mars}, \cite{Gota+Isen+Mars+Mont});

\item[$\blacklozenge $] the \textit{polysymplectic Hamiltonian geometry} $-$
elaborated by Giachetta, Mangiarotti and Sardanashvily (see \cite%
{Giac+Mang+Sard1}, \cite{Giac+Mang+Sard2});

\item[$\blacklozenge $] the \textit{De Donder-Weyl Hamiltonian geometry} $-$
studied by Kanatchikov (see \cite{Kana1}, \cite{Kana2}, \cite{Kana3}).
\end{itemize}

In such a geometrical context, the recent studies of Atanasiu and Neagu (see
the papers \cite{Atan-Neag2}, \cite{Atan-Neag0} and \cite{Atan+Neag1}) are
initiating the new way of distinguished Riemannian geometrization for
Hamiltonians depending on polymomenta, which is in fact a natural
"multi-time" extension of the already classical Hamiltonian geometry on
cotangent bundles synthesized in the Miron et al.'s book \cite%
{Miro+Hrim+Shim+Saba}. Note that our distinguished Riemannian geometrization
for Hamiltonians depending on polymomenta is different one by all three
Hamiltonian geometrizations from above (multisymplectic, polysymplectic and
De Donder-Weyl).

\section{Metrical multi-time Hamilton spaces}

Let us consider that $h=\left( h_{ab}\left( t\right) \right) $ is a
semi-Riemannian metric on the "multi-time" (\textit{temporal}) manifold $%
\mathcal{T}^{m}$, where $m=\dim \mathcal{T}$. Let $g=\left(
g^{ij}(t^{c},x^{k},p_{k}^{c})\right) $ be a symmetric d-tensor on the dual $%
1 $-jet space $E^{\ast }=J^{1\ast }(\mathcal{T},M^{n})$, which has the rank $%
n=\dim M$ and a constant signature. At the same time, let us consider a
smooth multi-time Hamiltonian function%
\begin{equation*}
E^{\ast }\ni (t^{a},x^{i},p_{i}^{a})\rightarrow H(t^{a},x^{i},p_{i}^{a})\in 
\mathbb{R},
\end{equation*}%
which yields the \textit{fundamental vertical metrical d-tensor}%
\begin{equation*}
G_{(a)(b)}^{(i)(j)}={\dfrac{1}{2}}{\frac{\partial ^{2}H}{\partial
p_{i}^{a}\partial p_{j}^{b}},}
\end{equation*}%
where $a,b=1,..,m$ and $i,j=1,...,n.$

\begin{definition}
A multi-time Hamiltonian function $H:E^{\ast }\rightarrow \mathbb{R},$
having the fundamental vertical metrical d-tensor of the form 
\begin{equation*}
G_{(a)(b)}^{(i)(j)}(t^{c},x^{k},p_{k}^{c})={\frac{1}{2}}{\frac{\partial ^{2}H%
}{\partial p_{i}^{a}\partial p_{j}^{b}}}%
=h_{ab}(t^{c})g^{ij}(t^{c},x^{k},p_{k}^{c}),
\end{equation*}%
is called a \textbf{Kronecker }$h$\textbf{-regular multi-time Hamiltonian
function}.
\end{definition}

In such a context, we can introduce the following important geometrical
concept:

\begin{definition}
A pair $MH_{m}^{n}=(E^{\ast }=J^{1\ast }(\mathcal{T},M),H),$ where $m=\dim 
\mathcal{T}$ and $n=\dim M,$ consisting of the dual $1$-jet space and a
Kronecker $h$-regular multi-time Hamiltonian function $H:E^{\ast
}\rightarrow \mathbb{R},$ is called a \textbf{multi-time Hamilton space}.
\end{definition}

\begin{remark}
In the particular case $(\mathcal{T},h)=(\mathbb{R},\delta ),$ a \textbf{%
"single-time" Hamilton space} will be also called a \textbf{relativistic
rheonomic Hamilton space}\textit{\ and it will be denoted by }$%
RRH^{n}=(J^{1\ast }(\mathbb{R},M),H)$.
\end{remark}

\begin{example}
Let us consider the Kronecker $h$-regular multi-time Hamiltonian function $%
H_{1}:E^{\ast }\rightarrow \mathbb{R}$ given by%
\begin{equation}
H_{1}=\frac{1}{mc}h_{ab}(t)\varphi ^{ij}(x)p_{i}^{a}p_{j}^{b},  \label{G}
\end{equation}%
where $h_{ab}(t)$ ($\varphi _{ij}(x)$, respectively) is a semi-Riemannian
metric on the temporal (spatial, respectively) manifold $\mathcal{T}$ ($M$,
respectively) having the physical meaning of \textbf{gravitational potentials%
}, and $m$ and $c$ are the known constants from Theoretical Physics
representing the \textbf{mass of the test body} and the \textbf{speed of
light}. Then, the multi-time Hamilton space $\mathcal{G}MH_{m}^{n}=(E^{\ast
},H_{1})$ is called the \textbf{multi-time Hamilton space of the
gravitational field}.
\end{example}

\begin{example}
If we consider on $E^{\ast }$ a symmetric d-tensor field $g^{ij}(t,x)$,
having the rank $n$ and a constant signature, we can define the Kronecker $h$%
-regular multi-time Hamiltonian function $H_{2}:E^{\ast }\rightarrow \mathbb{%
R},$ by setting%
\begin{equation}
H_{2}=h_{ab}(t)g^{ij}(t,x)p_{i}^{a}p_{j}^{b}+U_{(a)}^{(i)}(t,x)p_{i}^{a}+%
\mathcal{F}(t,x),  \label{NED}
\end{equation}%
where $U_{(a)}^{(i)}(t,x)$ is a d-tensor field on $E^{\ast },$ and $\mathcal{%
F}(t,x)$ is a function on $E^{\ast }$. Then, the multi-time Hamilton space $%
\mathcal{NED}MH_{m}^{n}=(E^{\ast },H_{2})$ is called the \textbf{%
non-a\-u\-to\-no\-mous multi-time Hamilton space of electrodynamics}. The
dynamical character of the gravitational potentials $g_{ij}(t,x)$ (i.e., the
dependence on the temporal coordinates $t^{c}$) motivated us to use the word 
\textbf{"non-autonomous".}
\end{example}

An important role for the subsequent development of our distinguished
Riemannian geometrical theory for multi-time Hamilton spaces is
re\-pre\-sen\-ted by the following result (proved in the paper \cite%
{Atan-Neag2}):

\begin{theorem}
\label{thchar} If we have $m=\dim \mathcal{T}\geq 2,$ then the following
statements are equivalent:

\emph{(i)} $H$ is a Kronecker $h$-regular multi-time Hamiltonian function on 
$E^{\ast }$.

\emph{(ii)} The multi-time Hamiltonian function $H$ reduces to a multi-time
Hamiltonian function of non-autonomous electrodynamic type. In other words
we have%
\begin{equation}
H=h_{ab}(t)g^{ij}(t,x)p_{i}^{a}p_{j}^{b}+U_{(a)}^{(i)}(t,x)p_{i}^{a}+%
\mathcal{F}(t,x).  \label{NEDTH}
\end{equation}
\end{theorem}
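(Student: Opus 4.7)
The plan is to prove the two implications separately, with (ii)$\Rightarrow$(i) being immediate and (i)$\Rightarrow$(ii) being where the hypothesis $m\geq 2$ enters essentially. For (ii)$\Rightarrow$(i), I would just differentiate (\ref{NEDTH}) twice with respect to the polymomenta and read off the fundamental vertical metrical d-tensor as $G^{(i)(j)}_{(a)(b)}=h_{ab}(t)\,g^{ij}(t,x)$, which is of the required Kronecker $h$-regular form.

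For (i)$\Rightarrow$(ii), the strategy is first to force the $p$-independence of $g^{ij}$ and then to Taylor-expand $H$ in the polymomenta. Starting from
\[
\frac{1}{2}\,\frac{\partial^{2}H}{\partial p_{i}^{a}\,\partial p_{j}^{b}} \;=\; h_{ab}(t)\,g^{ij}(t,x,p),
\]
I would differentiate once more with respect to $p_{k}^{c}$. The resulting third mixed partial of $H$ is totally symmetric in the three pairs $(i,a)$, $(j,b)$, $(k,c)$, and since $h_{ab}$ has no $p$-dependence this gives
\[
h_{ab}\,\frac{\partial g^{ij}}{\partial p_{k}^{c}} \;=\; h_{bc}\,\frac{\partial g^{jk}}{\partial p_{i}^{a}} \;=\; h_{ac}\,\frac{\partial g^{ik}}{\partial p_{j}^{b}}.
\]

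The key step --- and the place where $m\geq 2$ is used --- is a cyclic-contraction trick: multiplying the first equality by the inverse temporal metric $h^{ab}$ and summing collapses the $h$-factors through $h^{ab}h_{ab}=m$ and $h^{ab}h_{bc}=\delta^{a}_{c}$, yielding the cyclic identity
\[
m\,\frac{\partial g^{ij}}{\partial p_{k}^{c}} \;=\; \frac{\partial g^{jk}}{\partial p_{i}^{c}}.
\]
Iterating this rule three times (relabelling $(i,j,k)\mapsto(j,k,i)\mapsto(k,i,j)$) telescopes to
\[
\frac{\partial g^{ij}}{\partial p_{k}^{c}} \;=\; \frac{1}{m^{3}}\,\frac{\partial g^{ij}}{\partial p_{k}^{c}},
\]
so $(m^{3}-1)\,\partial g^{ij}/\partial p_{k}^{c}=0$, and since $m\geq 2$ forces $m^{3}\neq 1$, I conclude $g^{ij}=g^{ij}(t,x)$. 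I expect this index-cycling argument, short as it is, to be the main technical obstacle; everything else is routine.

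Once $G^{(i)(j)}_{(a)(b)}$ is $p$-independent, all mixed partials of $H$ of order $\geq 3$ in the polymomenta vanish, so a Taylor expansion of $H$ around $p=0$ truncates at second order and gives exactly (\ref{NEDTH}), with $U^{(i)}_{(a)}(t,x):=(\partial H/\partial p_{i}^{a})|_{p=0}$ and $\mathcal{F}(t,x):=H(t,x,0)$.
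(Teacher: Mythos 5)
Your proof is correct. The paper itself does not prove Theorem \ref{thchar} (it imports it from \cite{Atan-Neag2}), but your argument --- total symmetry of the third-order $p$-derivatives of $H$, contraction with $h^{ab}$ to obtain the cyclic relation $m\,\partial g^{ij}/\partial p_{k}^{c}=\partial g^{jk}/\partial p_{i}^{c}$, iteration around the index cycle to force $(m^{3}-1)\,\partial g^{ij}/\partial p_{k}^{c}=0$, and then an exact second-order Taylor expansion in the polymomenta along the vector-bundle fibres --- is essentially the standard proof given in that reference.
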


\begin{corollary}
The \textit{fundamental vertical metrical d-tensor of a }Kronecker $h$%
-regular multi-time Hamiltonian function $H$ has the form%
\begin{equation}
G_{(a)(b)}^{(i)(j)}={\frac{1}{2}}{\frac{\partial ^{2}H}{\partial
p_{i}^{a}\partial p_{j}^{b}}}=\left\{ 
\begin{array}{ll}
h_{11}(t)g^{ij}(t,x^{k},p_{k}^{1}), & m=\dim \mathcal{T}=1\medskip \\ 
h_{ab}(t^{c})g^{ij}(t^{c},x^{k}), & m=\dim \mathcal{T}\geq 2.%
\end{array}%
\right.  \label{FVDT}
\end{equation}
\end{corollary}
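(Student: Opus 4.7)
The plan is to split on $m=\dim\mathcal{T}$ and use the Kronecker $h$-regularity definition together with Theorem \ref{thchar}.

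First I would handle the case $m=1$. Here the only indices available are $a=b=1$, so the defining identity of a Kronecker $h$-regular multi-time Hamiltonian, namely $G^{(i)(j)}_{(a)(b)}=h_{ab}(t)g^{ij}(t^c,x^k,p_k^c)$, immediately collapses to $G^{(i)(j)}_{(1)(1)}=h_{11}(t)g^{ij}(t,x^k,p^1_k)$, which is the first branch of (\ref{FVDT}). No further argument is needed, because in the single-time setting the spatial d-tensor $g^{ij}$ is permitted to depend on the polymomenta $p^1_k$.

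For $m\geq 2$ I would invoke Theorem \ref{thchar}, which asserts that any Kronecker $h$-regular $H$ must take the non-autonomous electrodynamic form (\ref{NEDTH}). From that normal form I would simply compute $\partial^{2}H/\partial p_i^a\partial p_j^b$ directly. The term $\mathcal{F}(t,x)$ is killed by a single $p$-differentiation, and the linear term $U^{(i)}_{(a)}(t,x)p_i^a$ is killed by the second. The quadratic term $h_{ab}(t)g^{ij}(t,x)p_i^a p_j^b$, after using the symmetries $h_{ab}=h_{ba}$ and $g^{ij}=g^{ji}$, yields $\partial^{2}H/\partial p_i^a\partial p_j^b=2h_{ab}(t)g^{ij}(t,x)$; dividing by $2$ gives the second branch of (\ref{FVDT}), and crucially the resulting $g^{ij}$ depends only on $(t^c,x^k)$, not on the polymomenta.

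There is essentially no obstacle in this proof: once Theorem \ref{thchar} is in hand, everything reduces to a two-line differentiation. The only point that deserves explicit mention is precisely why the $p$-dependence of $g^{ij}$ disappears when $m\geq 2$: it is forced by the electrodynamic normal form supplied by Theorem \ref{thchar}, not by the Kronecker $h$-regularity condition alone, which is exactly what makes the $m=1$ and $m\geq 2$ cases structurally different in (\ref{FVDT}).
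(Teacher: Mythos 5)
Your proof is correct and follows exactly the route the paper intends: the $m=1$ branch is just the Kronecker $h$-regularity definition with $a=b=1$ (where $p$-dependence of $g^{ij}$ is still allowed), and the $m\geq 2$ branch is the two-fold $p$-differentiation of the electrodynamic normal form supplied by Theorem \ref{thchar}. The paper gives no explicit proof of this corollary, but your argument is precisely the implicit one.
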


We recall that the transformations of coordinates on the dual $1$-jet vector
bundle $J^{1\ast }(\mathcal{T},M)$ are given by%
\begin{equation*}
\begin{array}{ccc}
\widetilde{t}^{a}=\widetilde{t}^{a}\left( t^{b}\right) , & \widetilde{x}^{i}=%
\widetilde{x}^{i}\left( x^{j}\right) , & \widetilde{p}_{i}^{a}=\dfrac{%
\partial x^{j}}{\partial \widetilde{x}^{i}}\dfrac{\partial \widetilde{t}^{a}%
}{\partial t^{b}}p_{j}^{b},%
\end{array}%
\end{equation*}%
where $\det \left( \partial \widetilde{t}^{a}/\partial t^{b}\right) \neq 0$
and $\det \left( \partial \widetilde{x}^{i}/\partial x^{j}\right) \neq 0.$
In this context, let us introduce the following important geometrical
concept:

\begin{definition}
A pair of local functions on $E^{\ast }=J^{1\ast }(\mathcal{T},M),$ denoted
by 
\begin{equation*}
N=\left( \underset{1}{N}\overset{\left( a\right) }{_{\left( i\right) b}},\ 
\underset{2}{N}\overset{\left( a\right) }{_{\left( i\right) j}}\right) ,
\end{equation*}%
whose local components obey the transformation rules%
\begin{equation*}
\begin{array}{l}
\underset{1}{\widetilde{N}}\overset{\left( b\right) }{_{\left( j\right) c}}%
\dfrac{\partial \widetilde{t}^{c}}{\partial t^{a}}=\underset{1}{N}\overset{%
\left( c\right) }{_{\left( k\right) a}}\dfrac{\partial \widetilde{t}^{b}}{%
\partial t^{c}}\dfrac{\partial x^{k}}{\partial \widetilde{x}^{j}}-\dfrac{%
\partial \widetilde{p}_{j}^{b}}{\partial t^{a}},\medskip \\ 
\underset{2}{\widetilde{N}}\overset{\left( b\right) }{_{\left( j\right) k}}%
\dfrac{\partial \widetilde{x}^{k}}{\partial x^{i}}=\underset{2}{N}\overset{%
\left( c\right) }{_{\left( k\right) i}}\dfrac{\partial \widetilde{t}^{b}}{%
\partial t^{c}}\dfrac{\partial x^{k}}{\partial \widetilde{x}^{j}}-\dfrac{%
\partial \widetilde{p}_{j}^{b}}{\partial x^{i}},%
\end{array}%
\end{equation*}%
is called a \textbf{nonlinear connection} on $E^{\ast }$. The components $%
\underset{1}{N}\overset{\left( a\right) }{_{\left( i\right) b}}$ (resp. $%
\underset{2}{N}\overset{\left( a\right) }{_{\left( i\right) j}}$) are called
the\ \textbf{temporal} (resp. \textbf{spatial}) \textbf{components} of $N$.
\end{definition}

Following now the geometrical ideas of Miron from \cite{Miro}, the paper 
\cite{Atan-Neag2} proves that any Kronecker $h$-regular multi-time
Hamiltonian function $H$ produces a natural nonlinear connection on the dual
1-jet space $E^{\ast }$, which depends only by the given Hamiltonian
function $H$:

\begin{theorem}
The pair of local functions $N=\left( \underset{1}{N}\text{{}}_{(i)b}^{(a)},%
\underset{2}{N}\text{{}}_{(i)j}^{(a)}\right) $ on $E^{\ast },$ where ($\chi
_{bc}^{a}$ are the Christoffel symbols of the semi-Riemannian temporal
metric $h_{ab}$)%
\begin{equation*}
\begin{array}{l}
\underset{1}{N}\text{{}}_{(i)b}^{(a)}=\chi _{bc}^{a}p_{i}^{c},\medskip \\ 
\underset{2}{N}\text{{}}_{(i)j}^{(a)}=\dfrac{h^{ab}}{4}\left[ \dfrac{%
\partial g_{ij}}{\partial x^{k}}\dfrac{\partial H}{\partial p_{k}^{b}}-%
\dfrac{\partial g_{ij}}{\partial p_{k}^{b}}\dfrac{\partial H}{\partial x^{k}}%
+g_{ik}\dfrac{\partial ^{2}H}{\partial x^{j}\partial p_{k}^{b}}+g_{jk}\dfrac{%
\partial ^{2}H}{\partial x^{i}\partial p_{k}^{b}}\right] ,%
\end{array}%
\end{equation*}%
represents a nonlinear connection on $E^{\ast },$ which is called the 
\textbf{canonical nonlinear connection of the multi-time Hamilton space }$%
MH_{m}^{n}=(E^{\ast },H).$
\end{theorem}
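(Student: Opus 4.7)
The strategy is to verify directly that each of the two components obeys the transformation rule from the definition of a nonlinear connection. The tools I will need are the classical transformation law for the Christoffel symbols $\chi_{bc}^a$ of $h_{ab}$ on $\mathcal{T}$, the tensorial transformations of $h^{ab}$ and of the d-tensor $g_{ij}$ (the inverse of $g^{ij}$) on $E^{\ast}$, the scalar invariance of $H$, and the given transformation $\widetilde{p}_j^b = (\partial x^k/\partial \widetilde{x}^j)(\partial \widetilde{t}^b/\partial t^c)p_k^c$ of the momenta. Theorem \ref{thchar} will be a useful lever for the second half of the argument, since it supplies a closed form of $H$.

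For the temporal piece $\underset{1}{N}\,{}_{(i)b}^{(a)} = \chi_{bc}^a p_i^c$, the computation is essentially classical. I would form $\underset{1}{\widetilde{N}}\,{}_{(j)c}^{(b)}(\partial \widetilde{t}^c/\partial t^a)$, expand via the standard Christoffel transformation rule on $\mathcal{T}$, and substitute the transformation law for $\widetilde{p}_j^d$. The tensorial part of the expansion reproduces exactly $\underset{1}{N}\,{}_{(k)a}^{(c)}(\partial \widetilde{t}^b/\partial t^c)(\partial x^k/\partial \widetilde{x}^j)$, while the inhomogeneous term coming from the Christoffel symbols matches $-\partial \widetilde{p}_j^b/\partial t^a$ once I invoke the standard identity (obtained by differentiating $(\partial \widetilde{t}^b/\partial t^e)(\partial t^e/\partial \widetilde{t}^c) = \delta_c^b$) that relates $(\partial \widetilde{t}^b/\partial t^e)(\partial^2 t^e/\partial \widetilde{t}^c \partial \widetilde{t}^d)$ to mixed second derivatives of $\widetilde{t}$ with respect to $t$.

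For the spatial piece, my approach is to substitute the non-autonomous electrodynamic form $H = h_{cd}(t)g^{kl}(t,x)p_k^c p_l^d + U^{(k)}_{(c)}(t,x)p_k^c + \mathcal{F}(t,x)$ supplied by Theorem \ref{thchar} (for $m \geq 2$; the case $m=1$ is analogous via the Corollary, with $g^{ij}$ allowed to depend on $p_k^1$) into $\underset{2}{N}\,{}_{(i)j}^{(a)}$, and then change coordinates term by term. Each of the four expressions inside the bracket will decompose into a tensorial main part plus an anomaly that involves second derivatives of the coordinate transformation. The central task is to show that these anomalies combine, after multiplication by $h^{ab}/4$, into exactly the required expression
\begin{equation*}
-\frac{\partial \widetilde{p}_j^b}{\partial x^i} = -\frac{\partial^2 x^k}{\partial \widetilde{x}^l \partial \widetilde{x}^j}\frac{\partial \widetilde{x}^l}{\partial x^i}\frac{\partial \widetilde{t}^b}{\partial t^c}p_k^c.
\end{equation*}
This bookkeeping is the main obstacle. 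The symmetrization $g_{ik}(\partial^2 H/\partial x^j \partial p_k^b) + g_{jk}(\partial^2 H/\partial x^i \partial p_k^b)$ is precisely what absorbs the asymmetric anomaly produced by $(\partial g_{ij}/\partial x^k)(\partial H/\partial p_k^b) - (\partial g_{ij}/\partial p_k^b)(\partial H/\partial x^k)$, while the Kronecker regularity condition $\partial^2 H/\partial p_i^a \partial p_j^b = 2h_{ab}g^{ij}$ enforces the normalization responsible for the prefactor $1/4$. Conceptually the cancellation is forced by this symmetry structure, but the explicit verification is a lengthy index calculation; because Theorem \ref{thchar} reduces $H$ to an expression polynomial in $p$, however, every second derivative can be computed in closed form, and the anomalies from the $h$, $g$, $U$, and $\mathcal{F}$ transformations can be tracked separately and summed.
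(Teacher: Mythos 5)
First, a point of comparison: this paper does not actually prove the theorem --- it is imported as a result established in the reference \cite{Atan-Neag2} --- so there is no in-paper argument to measure yours against. That said, your strategy of directly verifying the two transformation rules from the definition of a nonlinear connection is the standard and correct one, and it is the method of the cited source (following Miron's cotangent-bundle antecedent). Your treatment of the temporal component is complete and correct: the inhomogeneous term in the transformation of $\chi^a_{bc}$, rewritten via the identity obtained by differentiating $(\partial\widetilde t^b/\partial t^e)(\partial t^e/\partial\widetilde t^c)=\delta^b_c$, contracts with $\widetilde p^d_j$ to give exactly $-\partial\widetilde p^b_j/\partial t^a$.

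The weak point is the spatial component: you correctly identify the structure of the cancellation but explicitly stop short of carrying it out, and that cancellation is the entire mathematical content of the claim. You can collapse the ``lengthy bookkeeping'' almost entirely by exploiting the decomposition the paper records right afterwards in its Corollary: for $m\geq 2$, substituting the electrodynamic form of $H$ from Theorem \ref{thchar} into the bracket yields $\underset{2}{N}{}^{(a)}_{(i)j}=-\Gamma^k_{ij}p^a_k+T^{(a)}_{(i)j}$, where $\Gamma^k_{ij}$ are the spatial Christoffel symbols of $g_{ij}(t,x)$ and $T^{(a)}_{(i)j}=\frac{h^{ab}}{4}(U_{ib\bullet j}+U_{jb\bullet i})$ is manifestly a d-tensor, being built from covariant derivatives of $U_{ib}$. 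The whole verification then reduces to the single classical fact that $\Gamma^k_{ij}$ transforms as a linear connection on $M$; its inhomogeneous term $\frac{\partial\widetilde x^k}{\partial x^r}\frac{\partial^2 x^r}{\partial\widetilde x^l\partial\widetilde x^j}$, contracted with $\widetilde p^b_k$, produces precisely $-\partial\widetilde p^b_j/\partial x^i$, while $T$ contributes only tensorially. One caveat about your reduction: Theorem \ref{thchar} is available only for $m\geq 2$; for $m=1$ the metric $g^{ij}$ may depend on the polymomenta, $H$ need not be polynomial in $p$, and the verification must be run for general $H$ (this is Miron's original computation). Calling that case ``analogous'' glosses over the one situation where your closed-form substitution is not available.
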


Taking into account the Theorem \ref{thchar} and using the \textit{%
generalized spatial Christoffel symbols} of the d-tensor $g_{ij}$, which are
given by%
\begin{equation*}
\Gamma _{ij}^{k}=\frac{g^{kl}}{2}\left( \frac{\partial g_{li}}{\partial x^{j}%
}+\frac{\partial g_{lj}}{\partial x^{i}}-\frac{\partial g_{ij}}{\partial
x^{l}}\right) ,
\end{equation*}%
we immediately obtain the following geometrical result:

\begin{corollary}
For $m=\dim \mathcal{T}\geq 2,$ the canonical nonlinear connection $N$ of a
multi-time Hamilton space $MH_{m}^{n}=(E^{\ast },H),$ whose Hamiltonian
function is given by (\ref{NEDTH}), has the components%
\begin{equation*}
\begin{array}{l}
\underset{1}{N}\text{{}}_{(i)b}^{(a)}=\chi _{bc}^{a}p_{i}^{c},\medskip \\ 
\underset{2}{N}\text{{}}_{(i)j}^{(a)}=-\Gamma
_{ij}^{k}p_{k}^{a}+T_{(i)j}^{(a)},%
\end{array}%
\end{equation*}%
where 
\begin{equation}
T_{(i)j}^{(a)}=\dfrac{h^{ab}}{4}\left( U_{ib\bullet j}+U_{jb\mathbf{\bullet }%
i}\right) ,  \label{aux_N2}
\end{equation}%
and%
\begin{equation*}
U_{ib}=g_{ik}U_{(b)}^{(k)},\qquad U_{kb\mathbf{\bullet }r}=\dfrac{\partial
U_{kb}}{\partial x^{r}}-U_{sb}\Gamma _{kr}^{s}.
\end{equation*}
\end{corollary}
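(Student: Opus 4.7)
The plan is to take the general formula for the spatial component of the canonical nonlinear connection, given in the preceding theorem, and specialize it to the non-autonomous electrodynamic Hamiltonian (\ref{NEDTH}). The temporal component $\underset{1}{N}{}_{(i)b}^{(a)}=\chi_{bc}^{a}p_{i}^{c}$ is immediate, since the theorem gives this formula independently of the particular form of $H$, so there is nothing to do there.

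For the spatial component, I would first compute the elementary partial derivatives of $H$ that enter the formula. Since $g_{ij}=g_{ij}(t,x)$ has no dependence on polymomenta, the term $(\partial g_{ij}/\partial p_{k}^{b})(\partial H/\partial x^{k})$ vanishes; moreover, the scalar summand $\mathcal{F}(t,x)$ of $H$ then drops out entirely, because it is killed either by a $p$-differentiation or by the vanishing coefficient $\partial g_{ij}/\partial p_{k}^{b}$. What remains is $\partial H/\partial p_{k}^{b}=2h_{bc}g^{kl}p_{l}^{c}+U_{(b)}^{(k)}$ and $\partial^{2}H/\partial x^{j}\partial p_{k}^{b}=2h_{bc}(\partial g^{kl}/\partial x^{j})p_{l}^{c}+\partial U_{(b)}^{(k)}/\partial x^{j}$. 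Substituting into the theorem's expression for $\underset{2}{N}{}_{(i)j}^{(a)}$ naturally splits the right-hand side into a part that is linear in the polymomenta (coming from the pure quadratic piece of $H$) and a part that is independent of the polymomenta (coming from the $U_{(a)}^{(i)}p_{i}^{a}$ piece).

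For the $p$-linear part, the prefactor collapses via $h^{ab}\cdot 2h_{bc}=2\delta_{c}^{a}$, so the momentum carries the index $a$ and one is left with
\[
\tfrac{1}{2}g^{kl}\Bigl[\tfrac{\partial g_{ij}}{\partial x^{k}}-\tfrac{\partial g_{ik}}{\partial x^{j}}-\tfrac{\partial g_{jk}}{\partial x^{i}}\Bigr]p_{l}^{a},
\]
where I have used the standard identity $g_{ik}(\partial g^{kl}/\partial x^{j})=-g^{kl}(\partial g_{ik}/\partial x^{j})$ obtained by differentiating $g_{ik}g^{kl}=\delta_{i}^{l}$. By the definition of $\Gamma_{ij}^{k}$, the bracketed expression equals $-2\Gamma_{ij}^{l}$, yielding exactly $-\Gamma_{ij}^{l}p_{l}^{a}$.

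For the $U$-part, the strategy is to verify the claimed equality by expanding $T_{(i)j}^{(a)}$ backwards. Write $U_{ib\bullet j}=\partial(g_{ik}U_{(b)}^{(k)})/\partial x^{j}-g_{sk}U_{(b)}^{(k)}\Gamma_{ij}^{s}$ and likewise for $U_{jb\bullet i}$, add them and use the symmetry $\Gamma_{ij}^{s}=\Gamma_{ji}^{s}$. The twice-Christoffel contribution combines via $2g_{sk}\Gamma_{ij}^{s}=\partial g_{ki}/\partial x^{j}+\partial g_{kj}/\partial x^{i}-\partial g_{ij}/\partial x^{k}$, and after cancellation against the explicit $\partial g/\partial x$ terms only $(\partial g_{ij}/\partial x^{k})U_{(b)}^{(k)}+g_{ik}\partial U_{(b)}^{(k)}/\partial x^{j}+g_{jk}\partial U_{(b)}^{(k)}/\partial x^{i}$ survives, which is precisely $4 h_{ab}^{-1}$ times the $U$-contribution obtained from the theorem's formula. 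The only mildly delicate bookkeeping step is this last cancellation of the connection terms against the derivatives of $g_{ij}$; once that identity is in hand, the corollary follows by direct substitution.
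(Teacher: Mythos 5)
Your proposal is correct and follows exactly the route the paper intends: the corollary is presented there as an immediate consequence of the preceding theorem, obtained by substituting the electrodynamic Hamiltonian (\ref{NEDTH}) into the general formula for $\underset{2}{N}{}_{(i)j}^{(a)}$, splitting off the momentum-linear part (which collapses to $-\Gamma_{ij}^{l}p_{l}^{a}$ via $h^{ab}h_{bc}=\delta_{c}^{a}$ and the derivative of $g_{ik}g^{kl}=\delta_{i}^{l}$) from the momentum-free part (which is identified with $T_{(i)j}^{(a)}$ by expanding the covariant derivatives $U_{ib\bullet j}$). Your bookkeeping, including the cancellation $2g_{sk}\Gamma_{ij}^{s}U_{(b)}^{(k)}$ against the $\partial g/\partial x$ terms leaving only $(\partial g_{ij}/\partial x^{k})U_{(b)}^{(k)}$, checks out.
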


\section{The Cartan canonical connection $C\Gamma (N)$ of a metrical
multi-time Hamilton space}

Let us consider that $MH_{m}^{n}=(J^{1\ast }(\mathcal{T},M),H)$ is a
multi-time Hamilton space, whose fundamental vertical metrical d-tensor is
given by (\ref{FVDT}). Let%
\begin{equation*}
N=\left( \underset{1}{N}\overset{\left( a\right) }{_{\left( i\right) b}},\ 
\underset{2}{N}\overset{\left( a\right) }{_{\left( i\right) j}}\right)
\end{equation*}%
be the canonical nonlinear connection of the multi-time Hamilton space $%
MH_{m}^{n}$.

\begin{theorem}[the generalized Cartan canonical $N$-linear connection]
On the multi-time Hamilton space $MH_{m}^{n}$ = $(J^{1\ast }(\mathcal{T}%
,M),H),$ endowed with the canonical nonlinear connection $N,$ there exists
an unique $h$-normal $N$-linear connection%
\begin{equation*}
C\Gamma (N)=\left( \chi _{bc}^{a},\text{ }A_{jc}^{i},\text{ }H_{jk}^{i},%
\text{ }C_{j\left( c\right) }^{i\left( k\right) }\right) ,
\end{equation*}%
having the metrical properties:\medskip

\emph{(i)} $g_{ij|k}=0,\quad g^{ij}|_{(c)}^{(k)}=0$,\medskip

\emph{(ii)} ${A_{jc}^{i}={\dfrac{g^{il}}{2}}{\dfrac{\delta g_{lj}}{\delta
t^{c}}},\quad H_{jk}^{i}=H_{kj}^{i},\quad C_{j(c)}^{i(k)}=C_{j(c)}^{k(i)},}$%
\medskip \newline
where \textquotedblright $_{/a}$\textquotedblright $,$ \textquotedblright $%
_{|k}$\textquotedblright\ and \textquotedblright $|_{(c)}^{(k)}$%
\textquotedblright\ represent the local covariant derivatives of the $h$%
-normal $N$-linear connection $C\Gamma (N)$.
\end{theorem}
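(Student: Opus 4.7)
The plan is to follow the classical Obata--Christoffel strategy employed throughout metric Finsler and Hamilton geometry: first extract candidate formulas for every undetermined coefficient from the axioms (uniqueness), and then verify that the resulting local functions transform as components of an $N$-linear connection (existence). Since $C\Gamma (N)$ is required to be $h$-normal, its temporal coefficient is forced to be the Christoffel symbol $\chi_{bc}^{a}$ of the temporal metric $h_{ab}$, so the only genuinely undetermined blocks are $A_{jc}^{i}$, $H_{jk}^{i}$ and $C_{j(c)}^{i(k)}$.

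To obtain uniqueness I would expand the horizontal metricity $g_{ij|k}=0$ as $\delta g_{ij}/\delta x^{k}=H_{ik}^{l}g_{lj}+H_{jk}^{l}g_{il}$, cyclically permute the free indices $(i,j,k)$ to produce two further copies, and combine the three equations using the symmetry $H_{jk}^{i}=H_{kj}^{i}$. A direct linear-algebra manipulation then yields the expected Christoffel-type formula $H_{jk}^{i}=(g^{il}/2)\bigl(\delta g_{lj}/\delta x^{k}+\delta g_{lk}/\delta x^{j}-\delta g_{jk}/\delta x^{l}\bigr)$. An entirely analogous permutation argument applied to the vertical metricity $g^{ij}|_{(c)}^{(k)}=0$ together with the symmetry $C_{j(c)}^{i(k)}=C_{j(c)}^{k(i)}$ determines $C_{j(c)}^{i(k)}$; by (\ref{FVDT}), in the case $m\geq 2$ the inverse metric $g^{ij}(t,x)$ is $p$-independent and this block collapses to $C_{j(c)}^{i(k)}=0$, while for $m=1$ one recovers the expected explicit Cartan-type expression. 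The formula for $A_{jc}^{i}$ in (ii) is obtained in exactly the same manner from the metricity of $g_{ij}$ in the temporal direction combined with the $h$-normal hypothesis. Hence all four blocks are forced, proving uniqueness.

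For existence, substituting each derived formula back into the axioms makes the metric compatibility conditions (i) and the symmetries in (ii) automatic by construction. The nontrivial content is the verification that the candidate components transform as an $N$-linear connection under the admissible coordinate change on $J^{1\ast}(\mathcal{T},M)$ recalled before the definition of $N$. For $H_{jk}^{i}$ this reduces to the classical Levi--Civita calculation, but with $\partial /\partial x^{k}$ replaced by the horizontal derivative $\delta /\delta x^{k}$: the non-tensorial correction produced by the difference $\delta /\delta x^{k}-\partial /\partial x^{k}$ is governed by the transformation law of $\underset{2}{N}{}_{(i)j}^{(a)}$, and its inhomogeneous part exactly cancels the non-tensorial contribution arising from the standard Christoffel computation. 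The analogous check for $A_{jc}^{i}$ uses the transformation rule of $\underset{1}{N}{}_{(i)b}^{(a)}$ together with the fact that the $\chi_{bc}^{a}$ already form a linear connection on $\mathcal{T}$.

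The main obstacle is precisely this transformation-law verification, because the operators $\delta /\delta t^{c}$ and $\delta /\delta x^{k}$ are themselves built from the nonlinear connection, so tracking the inhomogeneous terms through Christoffel-type combinations involves careful bookkeeping of several cross-terms. Once one commits to writing out the transformation rules of $\underset{1}{N}$, $\underset{2}{N}$, $g^{ij}$ and $h_{ab}$ explicitly, however, all non-tensorial contributions match by a direct if lengthy calculation, closing existence and metricity simultaneously.
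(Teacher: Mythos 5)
Your proposal is correct and follows essentially the same route as the paper: the Christoffel process on the indices $\{i,j,k\}$ applied to $g_{ij|k}=0$ (and its vertical analogue) combined with the stated symmetries forces $H_{jk}^{i}$ and $C_{j(c)}^{i(k)}$, while existence is checked by direct substitution; the paper merely presents the two directions in the opposite order and leaves the transformation-law verification implicit. The only small discrepancy is that the paper does not derive $A_{jc}^{i}$ from temporal metricity (which alone would determine only the symmetrized combination $g_{lj}A_{ic}^{l}+g_{li}A_{jc}^{l}$) but simply reads it off from condition (ii), where it is prescribed explicitly, so that block's uniqueness is immediate.
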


\begin{proof}
Let $C\Gamma (N)=\left( \chi _{bc}^{a},\text{ }A_{jc}^{i},\text{ }H_{jk}^{i},%
\text{ }C_{j\left( c\right) }^{i\left( k\right) }\right) $ be an $h$-normal $%
N$-linear connection, whose local coefficients are defined by the relations%
\begin{equation*}
\begin{array}{l}
\medskip A_{bc}^{a}=\chi _{bc}^{a},\qquad A_{jc}^{i}={{\dfrac{g^{il}}{2}}{%
\dfrac{\delta g_{lj}}{\delta t^{c}},}} \\ 
\medskip H{_{jk}^{i}=}\dfrac{g^{ir}}{2}\left( {\dfrac{\delta g_{jr}}{\delta
x^{k}}}+{\dfrac{\delta g_{kr}}{\delta x^{j}}}-{\dfrac{\delta g_{jk}}{\delta
x^{r}}}\right) {,} \\ 
C_{i(c)}^{j(k)}{=-{\dfrac{g_{ir}}{2}}\left( {\dfrac{\partial g^{jr}}{%
\partial p_{k}^{c}}}+{\dfrac{\partial g^{kr}}{\partial p_{j}^{c}}}-{\dfrac{%
\partial g^{jk}}{\partial p_{r}^{c}}}\right) }.%
\end{array}%
\end{equation*}%
Taking into account the local expressions of the local covariant derivatives
induced by the $h$-normal $N$-linear connection $C\Gamma (N)$, by local
calculations, we deduce that $C\Gamma (N)$ satisfies conditions (i) and (ii).

Conversely, let us consider an $h$-normal $N$-linear connection%
\begin{equation*}
\tilde{C}\Gamma (N)=\left( \tilde{A}_{bc}^{a},\text{ }\tilde{A}_{jc}^{i},%
\text{ }\tilde{H}_{jk}^{i},\text{ }\tilde{C}_{j\left( c\right) }^{i\left(
k\right) }\right)
\end{equation*}%
which satisfies conditions (i) and (ii). It follows that we have%
\begin{equation*}
\tilde{A}_{bc}^{a}{=\chi _{bc}^{a},\quad \tilde{A}_{jc}^{i}={\frac{g^{il}}{2}%
}{\frac{\delta g_{lj}}{\delta t^{c}}}}.
\end{equation*}

Moreover, the metrical condition $g_{ij|k}=0$ is equivalent with%
\begin{equation*}
{\frac{\delta g_{ij}}{\delta x^{k}}}=g_{rj}\tilde{H}_{ik}^{r}+g_{ir}\tilde{H}%
_{jk}^{r}.
\end{equation*}%
Applying now a Christoffel process to indices $\{i,j,k\}$, we find%
\begin{equation*}
\tilde{H}{_{jk}^{i}={\frac{g^{ir}}{2}}\left( {\frac{\delta g_{jr}}{\delta
x^{k}}}+{\frac{\delta g_{kr}}{\delta x^{j}}}-{\frac{\delta g_{jk}}{\delta
x^{r}}}\right) }.
\end{equation*}

By analogy, using the relations $C_{j(c)}^{i(k)}=C_{j(c)}^{k(i)}$ and $%
g^{ij}|_{(c)}^{(k)}=0$, together with a Christoffel process applied to
indices $\{i,j,k\}$, we obtain%
\begin{equation*}
\widetilde{C}_{i(c)}^{j(k)}{=-{\dfrac{g_{ir}}{2}}\left( {\dfrac{\partial
g^{jr}}{\partial p_{k}^{c}}}+{\dfrac{\partial g^{kr}}{\partial p_{j}^{c}}}-{%
\dfrac{\partial g^{jk}}{\partial p_{r}^{c}}}\right) }.
\end{equation*}

In conclusion, the uniqueness of the \textit{generalized Cartan canonical
connection} $C\Gamma (N)$ on the dual $1$-jet space $E^{\ast }=J^{1\ast }(%
\mathcal{T},M)$ is clear.
\end{proof}

\begin{remark}
\emph{(i)} Replacing the canonical nonlinear connection N of the multi-time
Hamilton space $MH_{m}^{n}$ with an arbitrary nonlinear connection $\hat{N}$%
, the preceding Theorem holds good.

\emph{(ii)} The generalized Cartan canonical connection $C\Gamma (N)$ of the
multi-time Hamilton space $MH_{m}^{n}$ verifies also the metrical properties%
\begin{equation*}
h_{ab/c}=h_{ab|k}=h_{ab}|_{(c)}^{(k)}=0,\quad g_{ij/c}=0.
\end{equation*}

\emph{(iii)} In the case $m=\dim \mathcal{T}\geq 2$, the coefficients of the
generalized Cartan canonical connection $C\Gamma (N)$ of the multi-time
Hamilton space $MH_{m}^{n}$ reduce to%
\begin{equation}
A_{bc}^{a}=\chi _{bc}^{a},\quad A_{jc}^{i}={{\dfrac{g^{il}}{2}}{\dfrac{%
\partial g_{lj}}{\partial t^{c}}}},\quad H_{jk}^{i}=\Gamma _{jk}^{i},\quad {%
C_{j(c)}^{i(k)}}=0.  \label{Cartan-local-coeff}
\end{equation}
\end{remark}

\section{Local d-torsions and d-curvatures of the generalized Cartan
canonical connection $C\Gamma (N)$}

Applying the formulas that determine the local d-torsions and d-curvatures
of an $h$-normal $N$-linear connection $D\Gamma (N)$ (see these formulas in 
\cite{Oana+Neag}) to the generalized Cartan canonical connection $C\Gamma
(N) $, we obtain the following important geometrical results:

\begin{theorem}
The torsion tensor $\mathbb{T}$ of the generalized Cartan canonical
connection $C\Gamma (N)$ of the multi-time Hamilton space $MH_{m}^{n}$ is
determined by the local d-components%
\begin{equation*}
\begin{tabular}{|l|l|l|l|l|l|}
\hline
& $h_{\mathcal{T}}$ & \multicolumn{2}{|l|}{$h_{M}$} & \multicolumn{2}{|l|}{$%
v $} \\ \hline
& $m\geq 1$ & $m=1$ & $m\geq 2$ & $m=1$ & $m\geq 2$ \\ \hline
$h_{\mathcal{T}}h_{\mathcal{T}}$ & $0$ & $0$ & $0$ & $0$ & $R_{(r)ab}^{(f)}$
\\ \hline
$h_{M}h_{\mathcal{T}}$ & $0$ & $T_{1j}^{r}$ & $T_{aj}^{r}$ & $%
R_{(r)1j}^{(1)} $ & $R_{(r)aj}^{(f)}$ \\ \hline
$vh_{\mathcal{T}}$ & $0$ & $0$ & $0$ & $P_{(r)1(1)}^{(1)\;\;(j)}$ & $%
P_{(r)a(b)}^{(f)\;\;(j)}$ \\ \hline
$h_{M}h_{M}$ & $0$ & $0$ & $0$ & $R_{(r)ij}^{(1)}$ & $R_{(r)ij}^{(f)}$ \\ 
\hline
$vh_{M}$ & $0$ & $P_{i(1)}^{r(j)}$ & $0$ & $P_{(r)i(1)}^{(1)\;(j)}$ & $0$ \\ 
\hline
$vv$ & $0$ & $0$ & $0$ & $0$ & $0$ \\ \hline
\end{tabular}%
\end{equation*}%
where

\emph{(i)} for $m=\dim \mathcal{T}=1,$ we have%
\begin{equation*}
\begin{array}{c}
T_{1j}^{r}=-A_{j1}^{r},\quad P_{i(1)}^{r(j)}=C_{i(1)}^{r(j)},\quad
P_{(r)1(1)}^{(1)\;\ (j)}=\dfrac{\partial \underset{1}{N}\overset{\left(
1\right) }{_{\left( r\right) 1}}}{\partial p_{j}^{1}}+A_{r1}^{j}-\delta
_{r}^{j}\chi _{11}^{1},\medskip \\ 
{P_{(r)i(1)}^{(1)\;(j)}={\dfrac{\partial \underset{2}{N}\overset{\left(
1\right) }{_{\left( r\right) i}}}{\partial p_{j}^{1}}+}}H_{ri}^{j}{,}\quad
R_{(r)1j}^{(1)}={{\dfrac{\delta \underset{1}{N}\overset{\left( 1\right) }{%
_{\left( r\right) 1}}}{\delta x^{j}}}-{\dfrac{\delta \underset{2}{N}\overset{%
\left( 1\right) }{_{\left( r\right) j}}}{\delta t},}}\medskip \\ 
{{R_{(r)ij}^{(1)}}={{\dfrac{\delta \underset{2}{N}\overset{\left( 1\right) }{%
_{\left( r\right) i}}}{\delta x^{j}}}-{\dfrac{\delta \underset{2}{N}\overset{%
\left( 1\right) }{_{\left( r\right) j}}}{\delta x^{i}}}};}%
\end{array}%
\end{equation*}

\emph{(ii)} for $m=\dim \mathcal{T}\geq 2,$ using the equality (\ref{aux_N2}%
) and the notations%
\begin{equation*}
\begin{array}{l}
\medskip \chi _{fab}^{c}=\dfrac{\partial \chi _{fa}^{c}}{\partial t^{b}}-%
\dfrac{\partial \chi _{fb}^{c}}{\partial t^{a}}+\chi _{fa}^{d}\chi
_{db}^{c}-\chi _{fb}^{d}\chi _{da}^{c}, \\ 
\mathfrak{R}_{kij}^{r}=\dfrac{\partial \Gamma _{ki}^{r}}{\partial x^{j}}-%
\dfrac{\partial \Gamma _{kj}^{r}}{\partial x^{i}}+\Gamma _{ki}^{p}\Gamma
_{pj}^{r}-\Gamma _{kj}^{p}\Gamma _{pi}^{r},%
\end{array}%
\end{equation*}%
we have%
\begin{equation*}
\begin{array}{l}
\medskip T_{aj}^{r}=-A_{ja}^{r},\quad P_{(r)a(b)}^{(f)\;\;(j)}=\delta
_{b}^{f}A_{ra}^{j},\quad R_{(r)ab}^{(f)}=\chi _{gab}^{f}p_{r}^{g}, \\ 
\medskip R_{(r)aj}^{(f)}=-{\dfrac{\partial \underset{2}{N}\overset{\left(
f\right) }{_{\left( r\right) j}}}{\partial t^{a}}}-\chi _{ca}^{f}T{%
_{(r)j}^{(c)}}, \\ 
R_{(r)ij}^{(f)}=-\mathfrak{R}_{rij}^{k}p_{k}^{f}+\left[
T_{(r)i|j}^{(f)}-T_{(r)j|i}^{(f)}\right] .%
\end{array}%
\end{equation*}
\end{theorem}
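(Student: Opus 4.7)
The plan is to apply the general local torsion and curvature formulas for an arbitrary $h$-normal $N$-linear connection $D\Gamma(N)=(A_{bc}^{a},A_{jc}^{i},H_{jk}^{i},C_{j(c)}^{i(k)})$ on $J^{1\ast}(\mathcal{T},M)$ (as given in \cite{Oana+Neag}) and specialize them to the generalized Cartan canonical connection $C\Gamma(N)$ whose coefficients were identified in the previous theorem. I would then separate the cases $m=1$ and $m\geq 2$, because in the second case the vertical coefficients $C_{j(c)}^{i(k)}$ vanish by (\ref{Cartan-local-coeff}), killing several mixed and vertical torsion entries at once.

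First I would list the vanishing entries and explain each by a symmetry: the $h_{\mathcal{T}}h_{\mathcal{T}}$-horizontal torsion vanishes because $A_{bc}^{a}=\chi_{bc}^{a}$ is symmetric in $(b,c)$; the $h_M h_M$-horizontal torsion vanishes because $H_{jk}^{i}=H_{kj}^{i}$; the $vv$-torsion vanishes because $C_{j(c)}^{i(k)}=C_{j(c)}^{k(i)}$. The $h_M h_{\mathcal{T}}$-horizontal torsion reduces to $T_{aj}^{r}=-A_{ja}^{r}$ (or $T_{1j}^{r}=-A_{j1}^{r}$ when $m=1$), since $\chi$ is purely temporal. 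When $m\geq 2$, the vanishing of $C_{j(c)}^{i(k)}$ immediately gives $P_{i(1)}^{r(j)}=0$ and removes the $P_{(r)i(1)}^{(1)(j)}$-type entry, while forcing $P_{(r)a(b)}^{(f)(j)}=\delta_{b}^{f}A_{ra}^{j}$ from the mixed formula.

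Next I would read off the curvature-type $v$-components of torsion from the definition of the curvature of the nonlinear connection $N$. The entries $R_{(r)ab}^{(f)}$, $R_{(r)aj}^{(f)}$, $R_{(r)ij}^{(f)}$ come from the standard brackets $[\delta/\delta t^a,\delta/\delta t^b]$, $[\delta/\delta t^a,\delta/\delta x^j]$, $[\delta/\delta x^i,\delta/\delta x^j]$ applied to the frame, so each is computed by substituting the canonical expressions $\underset{1}{N}{}_{(i)b}^{(a)}=\chi_{bc}^{a}p_{i}^{c}$ and $\underset{2}{N}{}_{(i)j}^{(a)}=-\Gamma_{ij}^{k}p_{k}^{a}+T_{(i)j}^{(a)}$. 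The identity $R_{(r)ab}^{(f)}=\chi_{gab}^{f}p_{r}^{g}$ then follows from the definition of $\chi_{gab}^{f}$ as the Riemann-type curvature of the Christoffel symbols $\chi$, applied to $\underset{1}{N}$.

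The main obstacle, and the place where the most care is needed, is the computation of $R_{(r)ij}^{(f)}=\delta\underset{2}{N}{}_{(r)i}^{(f)}/\delta x^{j}-\delta\underset{2}{N}{}_{(r)j}^{(f)}/\delta x^{i}$ in the case $m\geq 2$. Here one must expand $\delta/\delta x^{j}=\partial/\partial x^{j}-\underset{2}{N}{}_{(k)j}^{(c)}\partial/\partial p_{k}^{c}$ acting on $-\Gamma_{ri}^{k}p_{k}^{f}+T_{(r)i}^{(f)}$, antisymmetrize in $(i,j)$, and recognize that the $\Gamma$-terms assemble into the classical Riemann tensor $\mathfrak{R}_{rij}^{k}$ (after the $\Gamma\Gamma$ terms coming from the $p$-derivative and the nonlinear connection combine), while the remaining $T$-terms reorganize into the horizontal covariant commutator $T_{(r)i|j}^{(f)}-T_{(r)j|i}^{(f)}$. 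Once this bookkeeping is done, the remaining entries follow by the same pattern, and the table is complete.
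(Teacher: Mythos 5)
Your proposal follows exactly the route the paper takes: the paper offers no written-out proof beyond the remark that the theorem is obtained by ``applying the formulas that determine the local d-torsions and d-curvatures of an $h$-normal $N$-linear connection'' from \cite{Oana+Neag} to the coefficients (\ref{Cartan-local-coeff}) of $C\Gamma(N)$, which is precisely your plan, carried out with the correct use of the symmetries of $\chi_{bc}^{a}$, $H_{jk}^{i}$, $C_{j(c)}^{i(k)}$ and of the vanishing of $C_{j(c)}^{i(k)}$ for $m\geq 2$. The specialization of the nonlinear-connection curvatures via $\underset{1}{N}{}_{(i)b}^{(a)}=\chi_{bc}^{a}p_{i}^{c}$ and $\underset{2}{N}{}_{(i)j}^{(a)}=-\Gamma_{ij}^{k}p_{k}^{a}+T_{(i)j}^{(a)}$ is likewise the intended computation, so the proposal is correct and essentially identical in approach.
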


\begin{theorem}
The curvature tensor $\mathbb{R}$ of the generalized Cartan canonical
connection $C\Gamma (N)$ of the multi-time Hamilton space $MH_{m}^{n}$ is
determined by the following adapted local curvature d-tensors:%
\begin{equation*}
\begin{tabular}{|l|l|l|l|l|l|}
\hline
& $h_{\mathcal{T}}$ & \multicolumn{2}{|l|}{$h_{M}$} & \multicolumn{2}{|l|}{$%
v $} \\ \hline
& $m\geq 1$ & $m=1$ & $m\geq 2$ & $m=1$ & $m\geq 2$ \\ \hline
$h_{\mathcal{T}}h_{\mathcal{T}}$ & $\chi _{abc}^{d}$ & $0$ & $R_{ibc}^{l}$ & 
$0$ & $-R_{(l)(a)bc}^{(d)(i)}$ \\ \hline
$h_{M}h_{\mathcal{T}}$ & $0$ & $R_{i1k}^{l}$ & $R_{ibk}^{l}$ & $%
-R_{(i)(1)1k}^{(1)(l)}=-R_{i1k}^{l}$ & $-R_{(l)(a)bk}^{(d)(i)}$ \\ \hline
$vh_{\mathcal{T}}$ & $0$ & $P_{i1(1)}^{l\;\;(k)}$ & $0$ & $%
-P_{(i)(1)1(1)}^{(1)(l)\;(k)}=-P_{i1(1)}^{l\;\;(k)}$ & $0$ \\ \hline
$h_{M}h_{M}$ & $0$ & $R_{ijk}^{l}$ & $\mathfrak{R}_{ijk}^{l}$ & $%
-R_{(i)(1)jk}^{(1)(l)}=-R_{ijk}^{l}$ & $-R_{(l)(a)jk}^{(d)(i)}$ \\ \hline
$vh_{M}$ & $0$ & $P_{ij(1)}^{l\;(k)}$ & $0$ & $-P_{(i)(1)j(1)}^{(1)(l)%
\;(k)}=-P_{ij(1)}^{l\;(k)}$ & $0$ \\ \hline
$vv$ & $0$ & $S_{i(1)(1)}^{l(j)(k)}$ & $0$ & $%
-S_{(i)(1)(1)(1)}^{(1)(l)(j)(k)}=-S_{i(1)(1)}^{l(j)(k)}$ & $0$ \\ \hline
\end{tabular}%
\end{equation*}%
where, for $m\geq 2$, we have the relations%
\begin{equation*}
-R_{(l)(a)bc}^{(d)(i)}=\delta _{l}^{i}\chi _{abc}^{d}-\delta
_{a}^{d}R_{lbc}^{i},\quad -R_{(l)(a)bk}^{(d)(i)}=-\delta
_{a}^{d}R_{lbk}^{i},\quad -R_{(i)(a)jk}^{(d)(l)}=-\delta _{a}^{d}\mathfrak{R}%
_{ijk}^{l},
\end{equation*}%
and, generally, the following formulas are true:

\emph{(i)} for $m=\dim \mathcal{T}=1,$ we have $\chi _{111}^{1}=0$ and%
\begin{equation*}
\begin{array}{l}
R_{i1k}^{l}=\dfrac{\delta A_{i1}^{l}}{\delta x^{k}}-\dfrac{\delta H_{ik}^{l}%
}{\delta t}+A_{i1}^{r}H_{rk}^{l}-H_{ik}^{r}A_{r1}^{l}+C_{i\left( 1\right)
}^{l\left( r\right) }R_{\left( r\right) 1k}^{\left( 1\right) },\medskip \\ 
R_{ijk}^{l}=\dfrac{\delta H_{ij}^{l}}{\delta x^{k}}-\dfrac{\delta H_{ik}^{l}%
}{\delta x^{j}}+H_{ij}^{r}H_{rk}^{l}-H_{ik}^{r}H_{rj}^{l}+C_{i\left(
1\right) }^{l\left( r\right) }R_{\left( r\right) jk}^{\left( 1\right)
},\medskip \\ 
P_{i1\left( 1\right) }^{l\ \left( k\right) }=\dfrac{\partial A_{i1}^{l}}{%
\partial p_{k}^{1}}-C_{i\left( 1\right) /1}^{l\left( k\right) }+C_{i\left(
1\right) }^{l\left( r\right) }P_{\left( r\right) 1\left( 1\right) }^{\left(
1\right) \ \left( k\right) },\medskip \\ 
P_{ij\left( 1\right) }^{l\ \left( k\right) }=\dfrac{\partial H_{ij}^{l}}{%
\partial p_{k}^{1}}-C_{i\left( 1\right) |j}^{l\left( k\right) }+C_{i\left(
1\right) }^{l\left( r\right) }P_{\left( r\right) j\left( 1\right) }^{\left(
1\right) \ \left( k\right) },\medskip \\ 
S_{i(1)(1)}^{l(j)(k)}=\dfrac{\partial C_{i(1)}^{l(j)}}{\partial p_{k}^{1}}-%
\dfrac{\partial C_{i(1)}^{l(k)}}{\partial p_{j}^{1}}%
+C_{i(1)}^{r(j)}C_{r(1)}^{l(k)}-C_{i(1)}^{r(k)}C_{r(1)}^{l(j)};%
\end{array}%
\end{equation*}

\emph{(ii)} for $m=\dim \mathcal{T}\geq 2,$ we have%
\begin{equation*}
\begin{array}{l}
\chi _{abc}^{d}=\dfrac{\partial \chi _{ab}^{d}}{\partial t^{c}}-\dfrac{%
\partial \chi _{ac}^{d}}{\partial t^{b}}+\chi _{ab}^{f}\chi _{fc}^{d}-\chi
_{ac}^{f}\chi _{fb}^{d},\medskip \\ 
R_{ibc}^{l}=\dfrac{\partial A_{ib}^{l}}{\partial t^{c}}-\dfrac{\partial
A_{ic}^{l}}{\partial t^{b}}+A_{ib}^{r}A_{rc}^{l}-A_{ic}^{r}A_{rb}^{l},%
\medskip \\ 
R_{ibk}^{l}=\dfrac{\partial A_{ib}^{l}}{\partial x^{k}}-\dfrac{\partial
\Gamma _{ik}^{l}}{\partial t^{b}}+A_{ib}^{r}\Gamma _{rk}^{l}-\Gamma
_{ik}^{r}A_{rb}^{l}{,}\medskip \\ 
\mathfrak{R}_{ijk}^{l}=\dfrac{\partial \Gamma _{ij}^{l}}{\partial x^{k}}-%
\dfrac{\partial \Gamma _{ik}^{l}}{\partial x^{j}}+\Gamma _{ij}^{r}\Gamma
_{rk}^{l}-\Gamma _{ik}^{r}\Gamma _{rj}^{l}.%
\end{array}%
\end{equation*}
\end{theorem}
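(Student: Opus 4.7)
The plan is to specialize the general d-curvature formulas for an arbitrary $h$-normal $N$-linear connection $D\Gamma(N) = (A^a_{bc}, A^i_{jc}, H^i_{jk}, C^{i(k)}_{j(c)})$, as recorded in \cite{Oana+Neag}, to the Cartan coefficients fixed in the preceding Theorem. These generic formulas express each of the six families of curvature d-tensors (indexed by the six pairings $h_\mathcal{T}h_\mathcal{T}, h_Mh_\mathcal{T}, vh_\mathcal{T}, h_Mh_M, vh_M, vv$) as a combination of $\delta/\delta t^a$, $\delta/\delta x^k$ or $\partial/\partial p_k^c$ derivatives of the coefficients, quadratic contractions among them, and coupling terms in which $C^{i(k)}_{j(c)}$ meets the nonlinear connection curvatures $R^{(a)}_{(i)bc}, R^{(a)}_{(i)bk}, R^{(a)}_{(i)jk}, P^{(a)\ (k)}_{(i)b(c)}, P^{(a)\ (k)}_{(i)j(c)}$. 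The proof then splits according to the two regimes $m = 1$ and $m \geq 2$, since the Cartan coefficients themselves differ in these cases by Remark (iii).

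For $m = 1$ all four coefficient families are nontrivial, and direct substitution into the general formulas reproduces the displayed expressions for $R^l_{i1k}, R^l_{ijk}, P^{l(k)}_{i1(1)}, P^{l(k)}_{ij(1)}$ and $S^{l(j)(k)}_{i(1)(1)}$. The $C^{l(r)}_{i(1)}$-coupling terms visible in $R^l_{i1k}$ and $R^l_{ijk}$ arise from the brackets $[\delta/\delta t, \delta/\delta x^k]$ and $[\delta/\delta x^j, \delta/\delta x^k]$ generating the $N$-curvature components $R^{(1)}_{(r)1k}, R^{(1)}_{(r)jk}$. For $m \geq 2$, Remark (iii) gives $C^{i(k)}_{j(c)} = 0$ and $H^i_{jk} = \Gamma^i_{jk}$, so the purely vertical family $S$ and both $P$-type families vanish identically, while the remaining horizontal pieces collapse to the Ricci-bracket expressions for $\chi^d_{abc}, R^l_{ibc}, R^l_{ibk}$ and $\mathfrak{R}^l_{ijk}$ displayed in the statement. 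The essential structural point is that the temporal part of $C\Gamma(N)$ is the pullback of $\chi^a_{bc}(t)$, diagonal in the spatial/momentum indices; this diagonality applied to the general $v$-row formulas makes the index pairs $(d,a)$ and $(i,l)$ uncouple, thereby producing the three Kronecker-delta identifications $-R^{(d)(i)}_{(l)(a)bc} = \delta^i_l\chi^d_{abc} - \delta^d_a R^i_{lbc}$ and its two siblings.

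The main obstacle will not be algebraic but organizational: tracking the six-by-two case table, ensuring that every $v$-row entry is correctly identified (up to sign and a Kronecker delta) with the corresponding $h_M$- or $h_\mathcal{T}$-row entry, and verifying that in the $m \geq 2$ case the $\delta/\delta t^c$ derivative of $A^i_{jc}$ reduces to the ordinary $\partial/\partial t^c$ derivative used in the formula for $R^l_{ibc}, R^l_{ibk}$. This last reduction holds precisely because, for $m \geq 2$, the spatial metric $g_{ij}(t,x)$ is polymomentum-independent, so the vertical correction $\underset{1}{N}{}^{(c)}_{(k)b}\partial/\partial p_k^c$ inside $\delta/\delta t^b$ annihilates both $g_{ij}$ and $A^i_{jc}$. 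Once these identifications are recorded, the remaining verification is a routine (if tedious) substitution into the general formulas of \cite{Oana+Neag}.
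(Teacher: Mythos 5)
Your proposal matches the paper's own (very terse) argument: the authors simply state that the theorem follows by applying the general d-curvature formulas for an $h$-normal $N$-linear connection from \cite{Oana+Neag} to the coefficients of $C\Gamma(N)$, splitting into the cases $m=1$ and $m\geq 2$ exactly as you do. Your additional remarks on why the $v$-row entries uncouple via Kronecker deltas and why $\delta/\delta t^c$ reduces to $\partial/\partial t^c$ for $m\geq 2$ are correct and merely make explicit what the paper leaves to the reader.
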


\textbf{Acknowledgements.} The authors of this paper thank to Professor Gh.
Atanasiu for our interesting and useful discussions on this research topic.

Alexandru OAN\u{A} and Mircea NEAGU

University Transilvania of Bra\c{s}ov,

Department of Mathematics - Informatics,

Blvd. Iuliu Maniu, no. 50, Bra\c{s}ov 500091, Romania.

\textit{E-mails:} alexandru.oana@unitbv.ro, mircea.neagu@unitbv.ro

\end{document}